\newtheorem{theorem}{Theorem}[section]
\newtheorem{lemma}[theorem]{Lemma}
\newtheorem{proposition}[theorem]{Proposition}
\newtheorem{cor}[theorem]{Corollary}
\newtheorem{conj}[theorem]{Conjecture}
\theoremstyle{remark}
\newtheorem{remark}{Remark}
\newtheorem{definition}{Definition}
\numberwithin{equation}{section}
\newcommand{\N}{\mbox{$\mathbb{N}$}}
\newcommand{\Z}{\mbox{$\mathbb{Z}$}}
\newcommand{\Q}{\mbox{$\mathbb{Q}$}}
\newcommand{\F}{\mbox{$\mathbb{F}$}}
\begin{document}
\title{Iterates of prime producing polynomials and  their Galois groups\vspace{-.5em}}
\author{Sushma Palimar\vspace{-.5em}}
%    Address of record for the research reported here
\address{ 
Department of Mathematics,\\ 
Indian Institute of  Science,\\
Bangalore, Karnataka, India.}
\email{ sushmapalimar@gmail.com}
\subjclass[2010]{11T06,11T55,12F10, 37P15.}
\keywords{Galois groups; wreath product; Odoni's conjecture; prime specializations.}
\begin{abstract}
\vspace{-.3em}
Let $\F_q$ be a finite field of characteristic $p>0$. 
We  prove that, given $F(t,x)\in \F_q[t][x]$ an   irreducible separable monic polynomial in the variable $x$
and  a generic monic polynomial $\phi(t)$ in the variable $t$, 
 the  polynomial $F(t,\phi)$ is a prime producing polynomial over large finite fields.  We also prove that $F(t,\phi)$ satisfies Odoni's conjecture, namely the arboreal Galois
representation associated to $F(t,\phi)$  is surjective.
\vspace{-.8em}
 \end{abstract}
\maketitle
\section{Introduction}\label{intro}
Let  $K$ be a field and
$f(x)\in K[x]$ be  a polynomial of degree $d>1$.
For any $t\in K$, the Galois groups of polynomials of the form
$f^{\circ n}(x)-t$                                                        
are called {\em arboreal} Galois groups
  where, $f^{\circ n}:=f\circ f\circ\cdots\circ f$  is the $n$-th iterate of $f$  (composition of $f$ with itself $n$ times)  [BFHJY \cite{bend}]. Given such a polynomial, its {arboreal Galois representation} is surjective if and only if the Galois group $\mathrm{Gal}(f^{\circ n}(x)-t/K)\cong  [S_{d}]^{n}$, where $[S_d]^{n}$ is 
the \textit{$n$ fold iterated wreath product of the symmetric group $S_d$}, $$[S_d]^{n}=\underbrace{S_d\wr S_d\wr\dots\wr S_d}_\text{$n$ times}=\underbrace{(S_d\wr\cdots\wr S_d)^{d}}_\text{ $(n-1)$ times}\rtimes S_d.$$  
 Let $\mathbf a: = (a_0,\dots,a_{r-1})$ be a vector of independent variables. Let $q$ be a power of $p$, $\F_q$ the field with $q$ elements and $\mathrm{char} \F_q=p>0$.  In this article we take $K=\overline{\F}_q(\mathbf a)$. Let $F(t,x)\in \overline{\F}_q[t][x]$ be any monic irreducible  separable polynomial  in $x$ of degree $\mathrm{deg}_x F=m>0$. 
The polynomial
$$\phi(t)=t^{r}+a_{r-1}t^{r-1}+\dots+a_0 \in \F_q[\mathbf a,t]
$$
is  the {\em generic} monic polynomial of degree $r$ in $t.$ 
Consider the polynomial $F(t,\phi)$ in $\overline{\F}_q[\mathbf a,t]$.
Let  $F^{\circ n}(t,\phi)$ be the $n$-th iterate of $F(t,\phi)$.  We prove:
 \begin{theorem}\label{main_th1}
 Let $F(t,x)=x^{m}+\sum_{j=0}^{m-1}g_j(t)x^{j}\in \overline{\F}_q[t][x],\mathrm{deg}_x F=m>0$ be any monic  irreducible  separable polynomial in $x$. Let $\phi(t)$ be the generic monic  polynomial in $t$ of degree $r>2$ and 
 $r>\max_{0\leq j\leq m-1}\frac{\mathrm{deg}g_j(t)}{m-j}$. Denote $d=m\cdot r$. Consider
  $F(t,\phi)\in \F_q[\mathbf a,t], \mathrm{deg}_t(F(t,\phi))=d$.
Then, 
\begin{enumerate}
\item   $\mathrm{Gal}(F(t,\phi)/\overline{\F}_q(\mathbf{a})) $ is the symmetric group $S_d$.
\item 
For $n\in \N,$  $\mathrm{Gal}(F^{\circ n}(t,\phi)/\overline{\F}_q(\mathbf a))$ is $[S_d]^{n}$. 
\item 
The arboreal Galois representation
associated to $F(t,\phi)$ is surjective. 
\end{enumerate}
\end{theorem}
\subsection*{Arboreal Galois representation}
Let  $K$ be a  field and
  $f(x)\in K[x]$ be  a polynomial of degree $d>1$. Let $f^{\circ n}$ denote the $n$-th iterate of $f$, assume that $f^{\circ n}$ are all separable for $n\geq 1$.
  For $t\in K$  the set of {\em preimages} of $t$ is the  set of roots of 
$f^{\circ n}(x)=t$ in $\overline{K}$,
\begin{displaymath}
 f^{\circ -n}(t)=\{\alpha\in \overline{K}|f^{\circ n}(\alpha)=t\}, 
\end{displaymath}
and $K(f^{\circ -n}(t))$ is a Galois extension of $K.$ 
The preimage tree associated to $f$ with root $0$ is 
\begin{displaymath}
T_{\infty}:=\bigsqcup_{n\geq 0}\{\alpha\in \overline{K}|f^{\circ n}(\alpha)=0\}.
\end{displaymath}
$T_{\infty}$ has the structure of a rooted tree in the following sense:  a  root $\alpha$ of $f^{\circ n}$ is connected to a root $\beta$ of $f^{\circ (n-1)}$ by an edge if $f(\alpha)=\beta$. 
Assume that the solutions to $f^{\circ n}(x)=0$ are distinct.
The absolute Galois group $\mathrm{Gal}(\overline{K}/K)$ acts naturally on the infinite $d$-ary rooted tree $T_{\infty}$ by its action on the $f^{\circ -n}(t)$. This yields a continuous homomorphism  $\rho:\mathrm{Gal}(\overline{K}/K)\rightarrow \mathrm{Aut}(T_{\infty})$, known as the {\em arboreal Galois representation attached to $f$}, where, $d=\mathrm{deg}f$, Jones \cite{jon}. Denote the image of $\rho$  by $\mathrm{G}_{\infty}(f)$ and note that, $$G_{\infty}=\lim_{\leftarrow}G_{n}.$$
where, $G_{n}=\mathrm{Gal}(K(f^{\circ -n}(0))/K).$
 Let $T_n$ be the complete  $d$-ary rooted tree  formed
by the first $n$ levels of $T_{\infty}$.
Clearly $G_n$ acts faithfully on $T_n$, so that $G_n\hookrightarrow \mathrm{Aut}(T_n).$
The solutions to $f^{\circ n}(x)=0$ lie in the separable closure $K_s$ of $K$ in $\overline{K}$ and $\mathrm{Gal}(K_s/K)$ acts on this copy of $T_n$. This defines a continuous homomorphism $\rho_n:\mathrm{Gal}(K_s/K)\rightarrow \mathrm{Aut}(T_{n})$ and the image of $\rho_n$ is isomorphic to $G_n$. For a more detailed description see Boston and Jones  [\cite{bj}, \cite{nr}].
A classical result of
Nekrashevych [\cite{nvk}, Proposition 1.4.2] asserts that the 
group $\mathrm{Aut}(T_{n})$ of graph automorphisms of $T_{n}$ is isomorphic to $[S_d]^{n}$.  
\begin{remark}
\begin{enumerate}
\item  Juul [\cite{jl}, Theorem 1.1], proved Theorem 1.1 in the case $F(t,x)=x.$
 \item It is interesting to note that Theorem \ref{main_th1}
holds over any algebraically closed field of characteristic $p\geq 0$ and not just over $\overline{\F}_q.$
\end{enumerate}
\end{remark}
The study of Galois groups of iterated polynomials
dates to the pivotal work of Odoni in a series of papers \cite{Odo,Odo1,Odo2}, who carried out a systematic study to know
when the representation $\rho$ is surjective and when its image
$\mathrm{G}_{\infty}$ has finite index in $\mathrm{Aut}(T_{\infty})$.
Odoni [\cite{Odo}, Theorem 1] considered the  generic polynomial $\phi$ of degree $d>1$ over the fields $K$ of characteristic zero, for which Hilbert Irreducibility Theorem (HIT) holds, (also known as, Hilbertian fields). He showed that the arboreal representation associated to $\phi$ is surjective, i.e.,
  $\mathrm{Gal}(\phi^{\circ n}/K)\cong \mathrm{Aut}(T_n)\cong [S_d]^{n}$.
 Juul [\cite{jl}, Theorem 1.1] established this result for generic polynomials of degree $d$ defined over an algebraically closed field  of characteristic $p>0$ except in the case $p=d=2$. 
 Examples of Hilbertian fields include $\Q$,
number fields, and finite extensions of $K(t)$ for any field $K$, (see Fried \& Jarden 
[\cite{jm}, Chapter 12]).
 The following conjecture was proposed by Odoni.
 \begin{conj}Odoni [\cite{Odo}, Conjecture 7.5] \label{conj1}For any Hilbertian field $K$ of characteristic $0$ and any $d\geq 2,$ there is a monic polynomial of degree $d$ such that $\mathrm{Gal}(K(f^{\circ -n}(0))/K)\cong [S_d]^{n}$
 for all $n\geq 0.$
 \end{conj}
 
 For any point $t\in K$, if we set $g(x)=f(x+t)-t\in K[x]$  then the fields $K(f^{\circ -n}(t))$ and $K(g^{\circ -n}(0))$ coincide. Thus, one can state Odoni's conjecture in terms of the preimages $f^{\circ -n}(t)$ of an arbitrary $K$-rational point $t$ instead of $0$. (See Benedetto and Juul \cite{bj1}).
 The Conjecture \ref{conj1} asserts that  the associated arboreal representation $\rho: \mathrm{Gal}(\overline{K}/K)\rightarrow\mathrm{Aut}(T_{\infty})$ is surjective.
 
 Odoni proved  Conjecture \ref{conj1}  over $K=\Q$ for the quadratic  polynomial $x^2-x+1$. 
Looper \cite{nc}, proved Conjecture \ref{conj1} over the field $K=\Q$, for all polynomials of prime degrees $d$.
Looper's result was independently extended in [Kadets \cite{bk}, Benedetto \& Juul \cite{bj1} and Specter \cite{js1}]. 
Kadets \cite{bk}, proved
Conjecture \ref{conj1} over $\Q$ for polynomials of even degree $d \geq 20$.
Specter  \cite{js1} proved Odoni's conjecture for all number fields,  more generally, for all algebraic extensions $K/\Q$ that are
unramified outside of a finite set of primes.
Benedetto and Juul  \cite{bj1} proved Odoni's
conjecture for even degree polynomials over an arbitrary number field, and for odd degree
polynomials over number fields $\F$ that do not contain $\Q(\sqrt{d},\sqrt{d + 1})$.
In Theorem \ref{main_th1} we prove Odoni's conjecture
over the field $\F_q(\mathbf a)$, where $(\mathbf a)=(a_0,\dots, a_{r-1})$ is an $r$-tuple of indeterminates defined over $\F_q$.
We consider $F(t,x)=x^{m}+\sum_{j=0}^{m-1}g_j(t)x^j\in \overline{\F}_q[t][x],\mathrm{deg}_x=m>0$, a monic  irreducible  polynomial in $x$, which is separable  over $\overline{\F}_q(t)$.

By virtue of HIT, the irreducibility of $F(t,x)$ in $x$ 
implies, $\F_q$ contains infinitely many elements $s$ such that $F(t,s)$ is irreducible in $\F_q[t]$. 
This further implies, there exists monic irreducible polynomials $f(t)\in \F_q[t]$ of bounded degree such that, $F(t,f(t))$ is irreducible in $\F_q[t]$. 
In order  to study the general case, we evaluate $x$  by 
the generic polynomial 
$\phi(t)=t^{r}+a_{r-1}t^{r-1}+\dots+a_0\in \F_q[\mathbf a,t]$ of degree $r$ in $t$, such that $r>2$ and $r>\max_{0\leq j\leq m-1}\frac{\mathrm{deg}g_j(t)}{m-j}$.   The polynomial  $F(t,\phi)\in \overline{\F}_q[\mathbf a,t]$ is  monic in $t$ of degree $d=m\cdot r$. We show that, the arboreal representation associated to $F(t,\phi)$ is surjective.
For $n\in \N$, we denote by $K_n$, the splitting field of 
$F^{\circ n}(t,\phi)$ over $\overline{\F}_q(\mathbf a)$.
In order  to prove Theorem \ref{main_th1}, we calculate
the Galois group of $F(t,\phi)$ over $\overline{\F}_q(\mathbf a)$ and invoke the property  of linear independence of Galois extensions to
show that
$\mathrm{Gal}(K_{n}/K_{n-1})=(S_d)^{d^{n-1}}$,  the direct product of $d^{n-1}$ symmetric groups $S_d$  for all $n\geq 1$.
\subsection{Techniques}
The proof of  $\mathrm{Gal}(K_n/K_{n-1})=(S_d)^{d^{n-1}}$ relies on a classical result 
 of algebra known as,  Capelli's Lemma, which we will use many times
throughout the paper. We state it below without proof.
\begin{lemma}(Capelli's Lemma)\label{capl}
 Let $K$ be any field and let $f, g \in K[x]$.
Suppose $\alpha \in \overline {K}$ is any root of $f$. Then $f(g(x))$ is irreducible over $K$ if and
only if both $f(x)$ is irreducible over $K$ and $g(x)-\alpha$ is irreducible over $K(\alpha)$.
\end{lemma}
For a fixed $n\in \N$, the $n$-th iterate of $F(t,\phi)$ is  defined as:
\begin{displaymath}
F^{\circ n}:=F^{\circ n-1}(F(t,\phi))
\end{displaymath}
 By our notation, the splitting field of  $F^{\circ n-1}(t,\phi)$ is denoted as $K_{n-1}$.
Let $\beta_1,\dots,\beta_{d^{n-1}}$ be the $d^{n-1}$ distinct roots of $F^{\circ n-1}(t,\phi)$ in some algebraic closure of $\overline{\F}_q(\mathbf a)$; so that, $K_{n-1}=\overline{\F}_q(\mathbf a,\beta_1,\dots,\beta_{d^{n-1}})$. 
By definition,
\begin{equation}
 F^{\circ n}(t,\phi)=F^{\circ n-1}(F(t,\phi))=\prod_{i=1}^{d^{n-1}}F(t,\phi)-\beta_i
 \end{equation}
  $F(t,\phi)$ is irreducible in $t$ and separable over $\overline{\F}_q(\mathbf a)$ (given in  Proposition \ref{gal}). Since  
 each of 
 $F(t,\phi)-\beta_i$ is linear in $\beta_i$,   $F(t,\phi)-\beta_i$ is irreducible  and separable over $\overline{\F}_q(\mathbf  a, \beta_i)$, for $1\leq i\leq d^{n-1}$.
 Thus by  Capelli's lemma $F^{\circ n}(t,\phi)$ is irreducible in $t$ and separable over $\F_q(\mathbf a)$.
 To each  \textit{shifted polynomial} $F(t,\phi)-\beta_i$ we can associate a field extension $M_i/\overline{\F}_q(\mathbf a,\beta_i)$.
Our objectives in Theorem \ref{main_th1} is to
show that the Galois group of $F(t,\phi)$ over $\overline{\F}_q(\mathbf a)$ is $S_d$ and 
$M_1,\dots,M_{d^{n-1}}$ are linearly disjoint $S_d$-Galois extensions of $\overline{\F}_q(\mathbf a,\beta_1),\dots,\overline{\F}_q(\mathbf a,\beta_{d^{n-1}})$ respectively. Equivalently, 
\begin{equation}\label{li_gal}
 \mathrm{Gal}\Big(\prod_{i=1}^{d^{n-1}}F(t,\phi)-\beta_i/\overline{\F}_q(\mathbf a,\beta_i)\Big)\cong (S_d)^{d^{n-1}}.
\end{equation}
which is equivalent to the surjectivity of the permutation sign map (see Proposition \ref{se_iso} and Remark \ref{rem_iso}),
 \begin{equation}\label{sgn}
  \mathrm{Gal}(K_n/K_{n-1})\rightarrow \underbrace{(S_{d}/A_{d})\times\dots\times(S_{d}/A_{d})}_\text{ $d^{n-1}$ times }\cong \{\pm 1\}^{d^{n-1}}.
 \end{equation}
 These methods on linear independence serve as an alternate proof to Juul [\cite{jl}, Theorem 1.1].
 \subsection{Related   asymptotics}
 We conclude the introduction with a note on  special  application associated of Theorem \ref{main_th1}.  In \S \ref{Gal_1}, we show that $\mathrm{Gal}(F(t,\phi)/\overline{\F}_q(\mathbf a))=S_d$.
 Therefore by HIT
 one can specialize $\phi\rightarrow f\in \F_q[t]$ by taking $\F_q$ values for $a_i$ such that   $F(t,f)$ is irreducible over $\F_q$.
 By the work of Cohen [\cite{sdc3},
 Theorem 3], one can find
 the number of such monic $f\in \F_q[t]$ of degree $r$ in $t$, for which $F(t,f)$ is irreducible in $\F_q[t]$.
 The following result is  a Corollary  of 
 [Theorem 3, \cite{sdc3}] and Theorem \ref{main_th1}.
\begin{cor}\label{cor}
Assume the degree $\mathrm{deg}_t(F(t,f))=d$ is fixed. Then the number of  monic polynomials $f(t)\in \F_q[t]$  of degree  $r>\max_{0\leq j\leq m-1} \frac{\mathrm{deg}g_j(t)}{m-j}$ and $r>2$ for which $F(t,f(t))$ is irreducible in $\F_q[t]$ is
\begin{displaymath}
 \frac{q^{r}}{d}+O_{d}(q^{r-\frac{1}{2}}), \quad q\rightarrow \infty
\end{displaymath}
 the implied constant depends only on $d$, degree $\mathrm{deg}_t(F(t,f))$.
 \end{cor}
 \begin{remark}
  In 1966
Sarvadaman Chowla conjectured that the
 number of polynomials  of the form
\begin{equation}\label{lab0}
x^n+x+d
\end{equation}
which are irreducible modulo $p$ is asymptotic to $\frac{p}{n}$, as $p\rightarrow \infty$ for fixed $n.$ This was proved independently by  Cohen \cite{sdc} in 1970 and  Ree \cite{rr} in 1971, that,  for any prime power $q$, the number of $d$ such that $x^n+x+d$ is irreducible is indeed $\frac{q}{n}+O(q^{\frac{1}{2}})$  with
the implied constant depending only on $n$.
 One of the ingredient in the proof is the fact that the Galois group of the 
polynomial $x^n+x+t\in \F_q[t,x]$ over the function field  $\overline{\F}_q(t)$ is the 
symmetric group $S_n$ of order $n.$ 
The connection of these problems to Galois theory is established through  an explicit form of the Chebotarev density theorem for function fields,  which in turn depends on Weil's classical result on the Riemann hypothesis for curves over finite fields.
 Later Cohen [\cite{sdc3}, Theorem 3]  considered more general monic polynomials of degree $n$ in $\F_q[x]$ of the form:
\begin{equation}\label{coh}
 \{x^{n}+g_0(x)+a_1g_1(x)+\dots+a_mg_m(x):(a_1,\dots,a_m)\in \F_q^{m}\}
\end{equation}
where $g_i\in \F_q[x]$ has degree less than $n$ for $0\leq i\leq m$, and $g_1,\dots,g_m$ are linearly independent over $\F_q$. He proved that, as $q\rightarrow\infty$, the number of points $(a_1,\dots,a_m)\in \F_q$ for which the form  (\ref{coh}) is irreducible   in $\F_q[x]$ is  
$$\frac{q^{m}}{n}+O(q^{m-1/2}),$$ where the implied constant depends only on $n$.
Using the fact that the corresponding Galois group is $S_n$,
the required asymptotic is 
derived 
through an explicit form of the Chebotarev density theorem
for function fields. (See 
Gao, Howell and Panorio \cite{gao} for more details on  Cohen's results).
 \end{remark}

\subsection{Stable polynomials}\label{st}
A polynomial $f\in K[t]$ of degree at least $2$ is said to be {\em stable} if $f^{\circ n}$ is irreducible over $K$ for all $n\geq 1.$
 Jones and Boston [\cite{rj1}, Proposition 2.3], proved that if $f$ is a quadratic polynomial over a finite field $\F_q$ of odd characteristic then $f$ is stable
if and only if 
the set of {\em adjusted critical orbit of $f$, } \begin{displaymath}
     \mathrm{Orb}_{adj}(f) =   \{-f(\gamma)\}\cup\{f^{\circ i}(\gamma):i=2,3,\dots,\}
        \end{displaymath}
does not contain squares,  where $\gamma$ is the unique critical point of $f$.
When $K=\F_q$ is a finite field, there is some $k$ such that $f^{\circ k}=f^{\circ l}$ for $l<k$ and accordingly $\# \mathrm{Orb}_{adj}(f)=l.$
       
Perez, Nicolas, Ostafe and Sadornil  [\cite{prez}, Theorem 5.5] found the number of stable polynomials of degree $d\geq 2$ over finite fields $\F_q$ of characteristic $p\neq 2$. 
By Theorem \ref{main_th1},  $F^{\circ n}(t,\phi)$ is irreducible and separable over $\overline{\F}_q(\mathbf a)$ for all $n\geq 1.$ 
By HIT, there exists irreducible specialization for $\phi(t)\rightarrow f(t)\in \F_q[t]$ such that,
$F(t,f)$,$\dots$,$F^{\circ n}(t,f)$,$\dots$, are irreducible in $\F_q[t]$. Hence by definition,
the polynomial $F(t,f)$ is stable over $\F_q$. 
By the work of 
Andrade, Bary-Soroker and Rudnik [\cite{abz}, Theorem 3.1],
one can 
find the number of polynomials $f(t)\in \F_q[t]$ such that $F(t,f)$ is stable over finite fields $\F_q$ of characteristic $p>0$, as $q\rightarrow\infty.$ Under the assumptions of Theorem \ref{main_th1},
suppose the set  of adjusted critical orbit 
$\mathrm{Orb}_{adj}(F(t,f))=\{F(t,f),F^{\circ 2}(t,f),\dots,F^{\circ l}(t,f)\}$ be of cardinality $l>1$.
We note that each of 
$F(t,f),F^{\circ 2}(t,f),\dots,F^{\circ l}(t,f)$ is distinct in $\F_q[t]$ and   separable over $\overline{\F}_q$, which is nothing but, 
$\mu(F^{\circ i}(t,f))=\pm 1$ for $i=1,\dots,l$, where $\mu$ denotes the M\"{o}bius function for $\F_q[t]$. Thus, we have the following result which is a Corollary to Theorem \ref{main_th1} and [\cite{abz}, Theorem 1.4].
\begin{cor}\label{stability}
 Let $l>1$ be an integer and $\mathrm{deg}_t(F^{\circ i}(t,\phi))$ be fixed for $1\leq i\leq l$. Then, number of  monic  polynomials $f(t)\in \F_q[t]$ of degree $r>2$ and $r>\max_{0\leq j\leq m-1}\frac{\mathrm{deg}g_j(t)}{m-j}$  for which 
 $ F(t,f)$ is stable over $\F_q$ such that $\#\mathrm{Orb}_{adj}(F(t,f))=l$ is
 \begin{equation}\label{stable}
  \frac{q^{r}}{2^{l}}+O_{l,\mathrm{deg}F^{\circ i}}(q^{r{-1}/{2}}),\quad q\rightarrow \infty
 \end{equation}
where the implicit constant in the $O$-notation depends only on $l$ and the degrees $\mathrm{deg}_tF^{\circ i}$ for $1\leq i\leq l.$
 \end{cor}
 Corollary \ref{stability} follows  from Proposition \ref{arb} on
 the linear independence of the discriminants of 
$F^{\circ l}(t,\phi)$, $1\leq i\leq l$,   in the $\F_2$ vector space $\overline{\F}_q(\mathbf a)^{\times}/\overline{\F}_q(\mathbf a)^{\times 2}$.  
(The discriminant of a monic separable polynomial  over a field $K$ is defined in \S \ref{sqind}).
The required estimate is obtained from the function field analog of Chebotarev density theorem \cite{abz}. 
  \begin{remark}
   Perez, Nicolas, Ostafe and Sadornil [\cite{prez}, Theorem 5.5] proved Corollary \ref{stability} in the case of $F(t,f)=f$ over finite fields $\F_q$ of characteristic $p\neq 2.$
 \end{remark}
 Corollary \ref{stability} relates to the polynomial analog of Chowla conjecture on the autocorrelation of M\"{o}bius $\mu$ function for polynomials in $\F_q[t]$.
   This result was resolved by Carmon and Rudnick  [\cite{cdr}, Theorem 1.1]    when $q$
is odd, and  by Carmon \cite{cdr}   when $q$ is even.
We prove the first part of Theorem \ref{main_th1}  in \S \ref{Gal_1} and, rest of the proof  in \S \ref{iter}. In \S \ref{appl} we give a brief sketch of proof of Corollary  \ref{stability}. 

\subsection{Related results}
 The linear independence of $S_d$-Galois extension  plays an important role in establishing the polynomial analog (in the ring $\F_q[t]$) of classical conjectures in number theory  stated over $\Z.$

 For example, the work of Cohen [Theorem 3, \cite{sdc3}] led to the 
 study of
 general theorem on \textit{auto-correlations
of arithmetic class functions} over large finite fields;  these include characteristic functions on primes  (polynomial analog of Hardy-Littlewood prime tuple conjecture)  the M\"{o}bius $\mu$ function  and divisor functions (e.g.,    
 the Titchmarsh divisor problem). The required asymptotic is  established via the Chebotarev Density Theorem by considering the fact that, the  Galois group of the product of $m$ polynomials of degree $n>1$ is $(S_n)^{m}$, the direct product of $m$ copies of the symmetric group $S_n$.  These results are studied
in  great detail in the work of 
 Pollack [\cite{pol1}, \S 2.2, Lemma 5], Bary-Soroker [\cite{lbs1}, Proposition 1.7] and  ABR [\cite{abz}, Theorem 1.4].

 Furthermore, Schinzel Hypothesis H is a  conjecture in number theory which asserts that
 finitely many irreducible polynomials over integers simultaneously assume prime values infinitely often unless there is a local obstruction. The analogy between the rings $\Z$ and $\F_q[t]$,  suggests a
naive Hypothesis H analog for the ring $\F_q[t]$, that allows us to produce irreducible polynomials satisfying
certain properties.
 The quantitative  analog of this hypothesis H, is the Bateman-Horn conjecture [Bender \& Wittenberg \cite{aob}]. 
The polynomial analog of  Bateman-Horn conjecture over finite fields of characteristic $p>0$  is to find the asymptotic for  the simultaneous irreducibility of finite number of polynomials of bounded degree in  $\F_q[t]$.
Pollack [\cite{pol1}, Theorem 2] resolved
 the polynomial analog of the Schinzel Hypothesis by establishing the linear independence of $S_d$-Galois extensions  over $\F_q(t)$ and its arithmetic  analog, the Bateman-Horn conjecture over $\F_q$ for $\mathrm{char}\F_q\neq 2$.
Bary-Soroker \cite{lbs1} studied the polynomial analog of the Schinzel Hypothesis  over an algebraically closed field of characteristic $p\geq 0.$

In future we expect to relate these results to the minimum ramification problem.
The minimal ramification problem asks for the minimal number 
of ramified primes $r_{K}(G)$ in Galois extensions $L/K$ whose Galois group is a given group $G.$
Following the results by Meghan \cite{meg} for $K=\F_p(t)$, Bary-Soroker, Entin and Fehm [\cite{bef}]
proposed the following
 conjecture.  
 \begin{conj}\label{bef}(\cite{bef}, Conjecture 1.4)
  Let $q=p^{\nu}$ and  $K=\F_q(t)$. Then
  for a nontrivial finite group $G$ in a Galois extension $L/K$,
the minimal number of ramified primes $r_{\F_q(t)}(G)$ is 
          $\max\{1,d\} $.
 \end{conj}
Here, $d=d(G/p(G))^{ab}$ denotes the minimal number
of elements needed to generate  $G^{ab}$ and $p(G^{ab})$ is the subgroup of $G^{ab}$ generated by the $p$-Sylow subgroups of $G^{ab}$. 
Meghan [\cite{meg}, Theorem 2.6] and BEF  [\cite{bef}, Theorem 3.3] proved Conjecture \ref{bef} for nontrivial finite abelian group $G$.
Thus, Conjecture \ref{bef} predicts that,  there
exists an $S_n$-extension of $\F_q(t)$ ramified at only one prime, i.e., $r_{\F_q(t)}(S_n)=1$.
Additionally Meghan \cite{meg} observes that Conjecture \ref{bef} holds when $G$ is the direct product of finite copies of $S_n$, the symmetric group of order $n.$ This is proved in [\cite{bef}, Theorem 1.8]  under specific conditions on $n$ and $p$.
Therefore,  we may expect that Conjecture \ref{bef} holds in the following case. 
Let $x,t$ be indeterminates over $\F_q$ and $f(x)\in \F_q[x],\mathrm{deg}f(x)=d>0$ such that $\mathrm{Gal}(f^{\circ n}-t/\overline{\F}_q(t))=[S_d]^{n}$. Then, under certain conditions on $\mathrm{char}(\F_q)$ and the degree $d$, Conjecture \ref{bef} holds for $\mathrm{Gal}(f^{\circ n}-t/\overline{\F}_q(t))$. We plan to study this in our future work.

 \section{ Evaluating $\mathrm{Gal}(F(t,\phi)/\overline{\F}_q(\mathbf a))$}\label{Gal_1}
  In this section we prove  that $\mathrm{Gal}(F(t,\phi)/\overline{\F}_q(\mathbf a))\cong S_d$,   by showing that
 $\mathrm{Gal}(F(t,\phi)/\overline{\F}_q(\mathbf a))$ is a doubly transitive subgroup of $S_d$  containing a transposition.
 \begin{proposition}\label{gal}
The polynomial $F(t,\phi)$ in 
$\overline{\F}_q[\mathbf a,t]$ is an  irreducible polynomial which is  separable  over $\overline{\F}_q(\mathbf a)$ and monic  in $t$ of degree $d$.
\end{proposition}
\begin{proof}
Let us recall, $F(t,x)=x^{m}+\sum_{j=0}^{m-1}{g_j(t)x^{j}}\in {\F}_q[t][x]$ is a monic irreducible and separable polynomial  in $x$ of degree $m>0$.
The Hilbert irreducibility theorem guarantees 
 the existence of infinitely many monic polynomials $g(t)\in\F_{q}[t]$ of bounded degree for which the polynomial $F(t,g(t))$ is  irreducible in  $\F_q[t]$.  Thus for the generic polynomial $\phi(t)=t^{r}+a_{r-1}t^{r-1}+\dots+a_0$, defined over $\F_q$ of degree $r$ in $t$, such that $r>\max_{0\leq j\leq m-1}\frac{\mathrm{deg}g_j(t)}{m-j}$ and $r>2$   the polynomial
 $F(t,\phi)$ is irreducible in $\F_q[t]$ of degree $d=m\cdot r$ in $t$. 
 
 Uchida   \cite{uch} proved that any Hilbertian field is separably Hilbertian, i.e, if a   polynomial $F(t,x)$,  is  separable over $\overline{\F}_q(t)$, then there exists infinitely many specializations $s\in \F_q$ such that $F(t,s)$ is separable over $\F_q$. The work of  Rudnik [\cite{zr}, Theorem 1.1] asserts that,
 if $F(t,x)$ is a separable polynomial over $\F_q(t)$, then
 for almost all monic polynomials $g(t)\in \F_q(t)$ of bounded degree,  $F(t,g(t))$ is square-free  in any extension of $\F_q(t)$. Therefore, for the  generic monic polynomial 
 $\phi(t)$,
the  polynomial
 $F(t,\phi)$ is separable over $\overline{\F}_q(\mathbf a)$.  
Thus,  $\mathrm{Gal}(F(t,\phi)/\overline{\F}_q(\mathbf a))$ is a transitive subgroup of $S_d$, the symmetric group on  $d$ letters.
\end{proof}
 The notion of Hasse-Schmidt derivative comes as a replacement for the usual derivative in a field of positive characteristic.
Suppose $K$ is any algebraically closed field of characteristic $p\geq 0.$ 
Let $f=\sum_{i=0}^{n} a_ix^i\in K[x].$
Define, 
\begin{displaymath}
D^{[k]}f:=\sum_{i=k}^{n}  \binom {i}{k} a_i x^{i-k} \in K[x]
\end{displaymath}
       The function $D^{[k]}f:K[x]\rightarrow K[x]$ is called the $k$-th Hasse  derivative; $D^{[1]}f$ is the usual derivative $f^{\prime}$ of $f$.
 \begin{proposition}\label{trans-p}
$\mathrm{Gal}(F(t,\phi)/\overline{\F}_q(\mathbf{a}))$   contains a transposition. 
\end{proposition}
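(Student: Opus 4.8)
The plan is to realise a transposition as a generator of a tame inertia group at a suitable prime of $\F_q(\mathbf a)$, the hypothesis $p=\mathrm{char}\,\F_q>r$ being exactly what keeps the relevant ramification tame. Write $\mathbf f(t):=F(\mathbf a,t)\in\F_q(\mathbf a)[t]$, monic of degree $r$ in $t$ and separable over $\F_q(\mathbf a)$ by Proposition~\ref{propo1}, and set $D:=\mathrm{disc}_t\mathbf f\in\F_q[\mathbf a]$, which is a nonzero polynomial in $a_0,\dots,a_m$, again by that proposition. Suppose one can produce a monic irreducible $\pi=\pi(a_0)\in\F_q(a_1,\dots,a_m)[a_0]$ with $v_\pi(D)=1$. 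Every ramification index of a prime above $\pi$ in the algebra $\F_q(\mathbf a)[t]/(\mathbf f)$ is at most $r<p$, so the local situation at $\pi$ is tamely ramified, and hence $v_\pi(D)$ equals the sum of $(e-1)f$ over the primes above $\pi$, where $e$ and $f$ denote ramification index and residue degree; as this sum is $1$, exactly one such prime is ramified, with $e=2$ and $f=1$, while the others are unramified, so $\mathbf f$ reduces modulo $\pi$ to a polynomial with one double root and $r-2$ simple roots. Consequently the inertia group at a prime of the splitting field $\mathrm{L}$ lying over $\pi$ is cyclic of order $2$, and a generator fixes the $r-2$ simple roots while interchanging the two colliding ones: it is a transposition in $\mathrm{G}=\mathrm{Gal}(\mathrm{L}/\F_q(\mathbf a))$.

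It remains to produce such a $\pi$, and for this it suffices to show that $D$, regarded as a polynomial in $a_0$ over $\F_q(a_1,\dots,a_m)$, is separable: then every monic irreducible factor of $D$ in $a_0$ divides $D$ exactly once, and such a factor exists because $D\neq 0$. To compute $D$ I would first enlarge the constant field to $\F_{q^s}$, the splitting field of $F$ over $\F_q$; this is harmless, for $\F_{q^s}\subseteq\mathrm{L}$ already, so the splitting field of $\mathbf f$ over $\F_{q^s}(\mathbf a)$ is again $\mathrm{L}$ and a transposition of $\mathrm{Gal}(\mathrm{L}/\F_{q^s}(\mathbf a))$ is one of $\mathrm{G}$. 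Over $\F_{q^s}$ one has $\mathbf f(t)=\prod_{i=1}^n\bigl(g(\mathbf a,t)-\theta_i\bigr)$, where $\theta_1,\dots,\theta_n$ are the distinct roots of $F$. Combining $\mathrm{disc}(AB)=\mathrm{disc}(A)\,\mathrm{disc}(B)\,\mathrm{Res}(A,B)^2$ with $\mathrm{Res}_t\bigl(g-\theta_i,g-\theta_j\bigr)=(\theta_i-\theta_j)^d$, which is a nonzero constant independent of $\mathbf a$, yields $D=c\prod_{i=1}^n\mathrm{disc}_t\bigl(g(\mathbf a,t)-\theta_i\bigr)$ with $c\in\F_{q^s}^\times$; and, writing $\tilde g:=t^d+a_mt^m+\dots+a_1t$ so that $g=\tilde g+a_0$ and $g'=\tilde g'$, and letting $\beta_1,\dots,\beta_{d-1}$ be the roots of $g'$ (distinct for generic $a_1,\dots,a_m$, since $p>d$) with critical values $v_l:=\tilde g(\beta_l)$, a resultant computation gives $\mathrm{disc}_t\bigl(g-\theta_i\bigr)=c_i\prod_{l=1}^{d-1}\bigl(a_0+v_l-\theta_i\bigr)$ with $c_i\in\F_{q^s}^\times$. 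Therefore $D$ is a nonzero constant times $\prod_{i,l}\bigl(a_0-(\theta_i-v_l)\bigr)$, and $D$ is separable in $a_0$ precisely when the $n(d-1)$ quantities $\theta_i-v_l$ are pairwise distinct.

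This last genericity statement is the crux of the argument, and the step I expect to be the main obstacle. A coincidence $\theta_i-v_l=\theta_{i'}-v_{l'}$ with $i=i'$ forces $v_l=v_{l'}$, i.e.\ two distinct critical points of $\tilde g$ with the same critical value; a coincidence with $i\neq i'$, which then requires $l\neq l'$, forces $v_{l'}-v_l=\theta_i-\theta_{i'}$, a fixed nonzero element of $\F_{q^s}$. Each of these is a polynomial condition on $(a_1,\dots,a_m)$, so it is enough to exhibit a single specialisation at which both fail, and $\tilde g=t^d+a_1t$ (all other $a_j$ set to $0$, with $a_1$ left indeterminate) serves the purpose: then $g'=dt^{d-1}+a_1$ is separable, its roots satisfy $\beta_l^{\,d-1}=-a_1/d$, the critical values $v_l=a_1\beta_l(d-1)/d$ are pairwise distinct because the $\beta_l$ are, and each difference $v_{l'}-v_l$ is a nonconstant element of $\overline{\F_q(a_1)}$, hence cannot equal a constant $\theta_i-\theta_{i'}$. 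This shows $D$ is separable in $a_0$, produces the prime $\pi$ with $v_\pi(D)=1$, and the inertia argument of the first paragraph then delivers the transposition. When $F$ genuinely involves $t$ the same scheme applies: after a finite extension of the constant field one again reduces to showing that $\mathrm{disc}_t F\bigl(t,g(\mathbf a,t)\bigr)$ is separable as a polynomial in $a_0$, the only extra work being a more elaborate bookkeeping of the critical values attached to the curve $F(t,x)=0$.
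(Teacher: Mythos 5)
Your mechanism for producing the transposition is sound and is essentially the same one the paper uses: a prime at which exactly two roots collide, together with tameness coming from $p=\mathrm{char}\,\F_q>r$, forces the inertia group to be cyclic of order two generated by a transposition (this is the van der Waerden criterion the paper quotes right after its proof, and the valuation computation $v_\pi(D)=1$ even takes care of the maximality-of-the-order subtlety). The difference is in how the existence of such a prime is certified: the paper specializes one coefficient and invokes Rudnick's genericity result to get that the critical values of $F(\tilde{\mathbf a},t)$ are distinct and non-degenerate, whereas you try to verify directly that $D=\mathrm{disc}_t F(\mathbf a,t)$ is separable as a polynomial in $a_0$.

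The gap is in that verification. Your explicit computation of $D$ rests on the factorization $F(\mathbf a,t)=\prod_{i=1}^{n}\bigl(g(\mathbf a,t)-\theta_i\bigr)$ with the $\theta_i$ constants in $\F_{q^s}$, which only holds when $F(t,x)$ does not involve $t$, i.e.\ $F\in\F_q[x]$. The paper's hypothesis is $F(t,x)\in\F_q[t][x]$ with genuine $t$-dependence, and there the whole structure of your computation breaks down: the roots of $F(t,x)=0$ in $x$ are algebraic functions $\theta_i(t)$ rather than constants, the derivative of the composition is $F_t(t,g)+F_x(t,g)\,g'(t)$, so the critical points are no longer the critical points of $g$ and they themselves depend on $a_0$, and consequently $D$ is not a constant times $\prod_{i,l}\bigl(a_0-(\theta_i-v_l)\bigr)$; even the facts that $\deg_{a_0}D\geq 1$ and that $D$ has a simple irreducible factor are no longer visible from your formula. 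Your closing sentence concedes exactly this case to ``more elaborate bookkeeping,'' but that case is the content of the proposition, and the bookkeeping is precisely what Rudnick's theorem (cited in Lemma~\ref{morse-p}) supplies in the paper's argument. To close the gap you would need either to carry out the discriminant analysis for $F(t,g(\mathbf a,t))$ with $t$-dependent $F$ (for instance via a Morse-type genericity statement for the map $t\mapsto F(t,g(\mathbf a,t))$), or to import the same external input the paper does; as written, your proof establishes the proposition only in the special case $F\in\F_q[x]$.
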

\begin{proof}
Separability of $F(t,\phi)$ over $\overline{\F}_q(\mathbf a)$ implies $F(t,\phi)$  has  double roots in any extension of $\overline{\F}_q(\mathbf a)$.  
The only primes ramifying in the splitting field of $F(t,\phi) $ over $\overline{\F}_q(\mathbf a)$ are the primes dividing the discriminant $\mathrm{Disc}(F(t,\phi))$ of $F(t,\phi)$. Let $\F$ be the splitting field of $F(t,\phi)$ over $\overline{\F}_q(\mathbf a)$,
and $P\neq 0$ be any prime  dividing the discriminant of $F(t,\phi)$. Clearly, $P$ is a ramified prime in  $\F.$
Since $(\mathbf a)=(a_0,\dots, a_{r-1})$ are algebraically independent variables over $\F_q$,
 $F^{\prime}(t,\phi)$ and $D^{[2]}F(t,\phi)$ do not have any common roots.
i.e., $F^{\prime}(t,\phi)$ is separable over $\overline{\F}_q(\mathbf a)$. Therefore,  for any prime $P$ of $\F_q(\mathbf a)$,  every irreducible factor of 
  $F(t,\phi) \bmod{P}$ 
has ramification index less than $3$,
hence, there can be at most
one double root $\pmod {P}$ for each ramified prime $P$. This shows that the inertia
subgroup at $P$ is either trivial, or is of order two generated by a transposition. The inertia group is tame for  $p\neq 2$ and wild for $p=2$ and in both the cases inertia group is generated by a transposition. 
 \end{proof}
\begin{proposition}\label{prim-do}
  $\mathrm{Gal}(F(t,\phi) /\overline{\F}_q(\mathbf{a}))$ is doubly transitive.
\end{proposition}
\begin{proof}
To
 prove the  double transitivity of $\mathrm{Gal}(F(t,\phi) 
/\overline{\F}_q(\mathbf a))$
we follow the work of  Abhyankar [\cite{ssa1}, \S 4], known as {\em the method of throwing away roots}.
 Let  $\alpha_1,\dots,\alpha_d$ be the $d$  distinct roots of $F(t,\phi)$ in its splitting field $\F$. $F(t,\phi)$ splits into $d$ distinct linear factors $(t-\alpha_1)\dots(t-\alpha_d)$ in $\F[t]$.  
By eliminating a root $t=\alpha_1$ of $F(t,\phi)$, we  get,
\begin{equation}
 F_1(\mathbf a,t)=\frac{F(t,\phi)}{(t-\alpha_1)}\in \overline{\F}_q(\mathbf a)(\alpha_1)[t]
\end{equation}
Each of the  roots
 $\alpha_i$ for $1\leq i\leq d$ are transcendental over $\overline{\F}_q(\mathbf a)$.
Obviously, $F_1(\mathbf a,t)$ is irreducible  in $\overline{\F}_q(\mathbf a)(\alpha_1)[t]$ and separable over $\overline{\F}_q(\mathbf a)(\alpha_1)$. 
Consequently, $F(t,\phi)$ and $F_1(\mathbf a,t)$ are irreducible in $\overline{\F}_q(\mathbf a)[t]$ and $\overline{\F}_q(\mathbf a)(\alpha_1)[t]$ respectively.
This means, the stabilizer of the root 
$t=\alpha_1$ in the Galois group of $F_1(\mathbf a,t)$ acts transitively on other roots,
 implying the Galois group  of $F(t,\phi)$ over $\overline{\F}_q(\mathbf{a})$ is doubly transitive.
Since any doubly transitive group action is primitive, $\mathrm{Gal}(F(t,\phi)/\overline{\F}_q(\mathbf a))$ is primitive.
\end{proof}
By the fact that any primitive permutation group containing a transposition is symmetric, 
(see Serre [\cite{JP}, Lemma 4.4.3]), $\mathrm{Gal}(F(t,\phi)/\overline{\F}_q(\mathbf a))$ is isomorphic to $S_d$.
This concludes the proof  (1) of Theorem 1.1.
\section{Galois groups of iterates of $F(t,\phi)$ over $\overline{\F}_q(\mathbf a)$.} \label{iter}
 In this section we prove condition (2) and (3) of Theorem \ref{main_th1}.
 Let us recall that for a fixed $n\in \N$,   the $n$th iterate of $F(t,\phi)$ is denoted as,
 \begin{displaymath}
F^{\circ n} (t,\phi):=F^{\circ n-1} (F(t,\phi))
\end{displaymath}
 and let $K_n$ be  the splitting field of $F^{\circ n}(t,\phi)$ over $\overline{\F}_q(\mathbf a)$
 which is the constant field extension of $\overline{\F}_q(\mathbf{a})$ contained in a fixed algebraic closure $\Omega$ of $\overline{\F}_q(\mathbf{a})$. 
 We set, $F^{\circ 1}(t,\phi)$ as $F(t,\phi)$ and $K_0=\overline{\F}_q(\mathbf a)$.
 The objective of the second and third part of Theorem \ref{main_th1} is to show that for $n\in \N$, $$\mathrm{Gal}(F^{\circ n}(t,\phi)/\overline{\F}_q(\mathbf a))=(S_d)\times(S_d)^{d}\times\cdots \times(S_d)^{d^{n-1}}.$$
\begin{proposition}\label{lin_sep}
 With the above notation, for any positive integer $n$, 
 $F^{\circ n}(t,\phi)\in \overline{\F}_q(\mathbf a)[t]$ is  a monic irreducible polynomial  of degree $d^{n}$ in $t$ which is  separable over $\overline{\F}_q(\mathbf a)$.
\end{proposition}
\begin{proof}
 We prove the statement using induction on $n$.
 For $n=1$, the polynomial $F(t,\phi)$ is irreducible in $t$ and separable over $\overline{\F}_q(\mathbf a)$.
Therefore the result holds true for $n=1$. For $n\geq 2$ we use Capelli's Lemma.
Suppose  the result holds true for  $n-1$. Let $\beta_1,\dots,\beta_{d^{n-1}}$ be the $d^{n-1}$ distinct roots of $F^{\circ n-1}(t,\phi)$ in its splitting field $K_{n-1}/\overline{\F}_q(\mathbf a)$. 
Then,
\begin{equation}\label{kn+1prod}
 F^{\circ n}(t,\phi)=F^{\circ n-1}(F(t,\phi))=\prod_{i=1}^{d^{n-1}}F(t,\phi)-\beta_i
 \end{equation}
 The $d^{n-1}$ distinct factors, $F(t,\phi)-\beta_i$ on the right of (\ref{kn+1prod}) are  linear in $\beta_i$, therefore, these polynomials are irreducible  and separable over $\overline{\F}_q(\mathbf a, \beta_i)$ for $1\leq i\leq d^{n-1}$.
 Additionally, $F(t,\phi)-\beta_i\in K_{n-1}[t], \mathrm{deg}_t(F(t,\phi)-\beta_i)=d$
  are irreducible and pairwise-coprime  in $K_{n-1}[t]$.
Thus, $F^{\circ n}(t,\phi)$ is an irreducible and separable polynomial over $\overline{\F}_q[\mathbf a]$ which is monic in $t$ of degree $d^{n}$.
 \end{proof}
Denote  by $M_i$, the  splitting field of 
 $F(t,\phi)-\beta_i$ over $\overline{\F}_q(\mathbf a,\beta_i)$ for $1\leq i\leq d^{n-1}$.
 Then since the $d^{n-1}$ distinct roots $\beta_1,\dots,\beta_{d^{n-1}}$   of $F^{\circ n-1}(t,\phi)$ are  transcendental over 
$\overline{\F}_q$, 
\begin{equation}
   \mathrm{Gal}(F(t,\phi)/\overline{\F}_q(\mathbf a)) \cong  \mathrm{Gal}(F(t,\phi)-\beta_i/\overline{\F}_q(\mathbf a,\beta_i))\cong S_d
  \text{ for } 1\leq i\leq d^{n-1}.
  \end{equation}
  (See for example, Pollack [\cite{pol1}, \S 2.2]).
  Equivalently,
  \begin{displaymath}
 \mathrm{Gal}(M_i/\overline{\F}_q(\mathbf a,\beta_i))\cong S_d \text{ for } 1\leq i\leq d^{n-1}.
\end{displaymath}
and  $$M_i\cap M_j=\overline{\F}_q(\mathbf a), \text{ for }1\leq i\neq j \leq d^{n-1}.$$
Therefore the Galois extensions,  
\begin{equation}\label{disj_spli}
M_1/\overline{\F}_q(\mathbf a,\beta_1), \dots, M_{d^{n-1}}/\overline{\F}_q(\mathbf a,\beta_{d^{n-1}})
 \end{equation}                                                                             
are linearly disjoint over $\overline{\F}_q(\mathbf a)$, so that, 
\begin{displaymath}
 \mathrm{Gal}\Big(\prod_{i=1}^{d^{n-1}}F(t,\phi)-\beta_i/\overline{\F}_q(\mathbf a,\beta_i)\Big)\cong (S_d)^{d^{n-1}}.
\end{displaymath}
 (See, [Lang [\cite{lan}, VI, \S 1.14, \S 1.15] for example).

\begin{proposition}\label{se_iso}
Denote $M_i^{\prime}$:=$M_iK_{n-1}$,  for 
$ 1\leq i\leq d^{n-1}.$
 Then $M_i^{\prime}/K_{n-1}$ is a Galois extension and the  Galois group $
 \mathrm{Gal}(M_i^{\prime}/K_{n-1})\cong S_d$. Moreover, $M_1^{\prime}/K_{n-1}$, $\dots$, $M_{d^{n-1}}^{\prime}/K_{n-1}$ are linearly disjoint over $K_{n-1}$.
\end{proposition}
\begin{proof}
From the above discussion,
each of $M_i/\overline{\F}_q(\mathbf a,\beta_i)$ is an $S_d$-Galois extension.
Changing the base by $K_{n-1},$ 
the extension $M_i^{\prime}/K_{n-1}$ is a Galois extension and $M_{i}^{\prime}\cap M_{j}^{\prime} =K_{n-1}$  for 
$1\leq i\neq j\leq d^{n-1}.$
Therefore,
$M_{1}^{\prime},\dots,M_{d^{n-1}}^{\prime}$ are linearly disjoint over $K_{n-1}$. 
Thus, the field extension $K_{n}/K_{n-1}$ is the compositum of linearly disjoint field extensions $M_{1}^{\prime}/K_{n-1},\dots,M_{d^{n-1}}^{\prime}/K_{n-1}$ written as, 
\begin{equation}\label{lin-hol}
K_{n}/K_{n-1}= M_{1}^{\prime}/K_{n-1}\cdots M_{d^{n-1}}^{\prime}/K_{n-1}. 
\end{equation}
Furthermore, 
$\mathrm{Gal}(M_iK_{n-1}/K_{n-1})$ is a normal subgroup of $ \mathrm{Gal}(M_i/\overline{\F}_q(\mathbf a,\beta_i))$,  and, $$M_i\cap K_{n-1}=\overline{\F}_q(\mathbf a,\beta_i).$$ Therefore,
\begin{displaymath}
\mathrm{Gal}(M_iK_{n-1}/K_{n-1})\cong \mathrm{Gal}(M_i/M_i\cap K_{n-1})=\mathrm{Gal}(M_i /\overline{\F}_q(\mathbf a,\beta_i))=S_d.
  \end{displaymath}
  (See Ash [\cite{rba}, Theorem 6.2.2] for example).
  Thus,
  \begin{equation}\label{la1-eq}
  \mathrm{Gal}(M_iK_{n-1}/K_{n-1})
= \mathrm{Gal}(M_i^{\prime}/K_{n-1})= S_d \text{ for } 1\leq i\leq d^{n-1}.
\end{equation}
\end{proof}
\subsection{Square-free  discriminants}\label{sqind}
We begin by recalling some basic facts about the discriminant of a polynomial.
 Suppose $f(x)$ is a  monic separable polynomial of degree $n$ over an arbitrary field $\F$ and
 $\rho_1$,$\dots$,$\rho_n$ are  the roots of $f(x)$ in some extension field $L$ of  $\F$.
  The quantity 
 \begin{equation}\label{delta}
 \delta(f)=\prod_{i<j}(\rho_i-\rho_j).\end{equation}
 satisfies a quadratic equation, 
 $\mathrm{Disc}{f}=\delta^{2}$ 
 where  $\mathrm{Disc}f$ is the discriminant of $f$.
 In the case of odd characteristic $p$, $\delta(f)$ is preserved by 
  the even permutations of the roots of $f(x)$.
  If   $\mathrm{Disc}(f)$ has no square root in $\F$ then, 
  $\mathrm{Disc}(f)$ can be thought of as an element in $ \F^{*}/\F^{*2}$, where $\F^*=\F-\{0\}$ (see Wadsworth \cite{aw}).
  In the even characteristic case,  
 the discriminant of a monic separable polynomial is defined in terms of 
Berlekamp's  polynomial discriminant.
\begin{definition} \label{ber}
Suppose $f(x)$ is a  monic separable polynomial of degree $n$ over an arbitrary field $\F$ of even characteristic.
 Let $\rho_1,\dots,\rho_n$ be the $n$ distinct roots of $f$ in a splitting field over $\F$. The Berlekamp's discriminant of $f$ is given by
\begin{displaymath}
 \mathrm{Berl}(f)=\sum_{1\leq i\leq n}\frac{\rho_i\rho_j}{\rho_i^{2}+\rho_j^{2}}.
\end{displaymath}
Let
\begin{equation}\label{beta}
 c=\sum_{1\leq i\leq j\leq n}\frac{\rho_i}{\rho_i+\rho_j},
\end{equation}
then, $c$ satisfies the quadratic equation $c^2+c=\mathrm{Berl}(f)$.
\end{definition}
  $\mathrm{Berl}(f)$ is invariant under all permutations of the roots, hence lies in $\F$.  In this case,
discriminants take values in the additive group $\F/\mathcal{P}(\F)$ where, $\mathcal{P}(\F)=\{ c^2+c|c\in \F\}$. $\F/\mathcal{P}(\F)$ classifies separable quadratic extensions of $\F$ just as $\F^{*}/\F^{*2}$ does in $\mathrm{char}(\F)\neq 2.$ 
A fundamental property of Berlekamp discriminant is that the Galois group of 
$f$ over $\F$ contains an odd permutation of the roots of $f$ iff there exists an $\alpha\in \F$ such that $\mathrm{Berl}(f)=\alpha^{2}+\alpha,$ 
 (see [Berlekamp \cite{ber}]). 

\begin{remark}\label{rem_iso}
By Proposition \ref{se_iso}, 
\begin{displaymath}
 \mathrm{Gal}(M_i^{\prime}/K_{n-1})\cong S_d\text{ for } 1\leq i\leq d^{n-1},
\end{displaymath}
and $M_1^{\prime}$,$\dots$,$M_{d^{n-1}}^{\prime}$ are linearly disjoint over $K_{n-1}$.
Additionally, when $q$ is odd, for $1\leq i\neq j\leq d^{n-1}$, the polynomials 
\begin{equation}\label{dsc1}
 \mathrm{Disc}(F(t,\phi)-\beta_i) 
 \text{ and } \mathrm{Disc}(F(t,\phi)-\beta_j)  
\end{equation}
do not have common roots and  are square-free.
Similarly when $q$ is even, for $1\leq i\neq j\leq d^{n-1}$, the polynomials 
\begin{equation}\label{dsc2}
 \mathrm{Berl}(F(t,\phi)-\beta_i) 
 \text{ and } \mathrm{Berl}(F(t,\phi)-\beta_j) 
\end{equation}
  do not have common roots and are square-free.
  Denote by
$E_i^{\prime}$, 
the unique quadratic extension  of $K_{n-1}$ inside  $M_i^{\prime}$ 
given by, \begin{equation}
           \begin{split}
            E_i^{\prime}=K_{n-1}(\sqrt{\mathrm{Disc}(F(t,\phi)-\beta_i)}) \text{ if } q\text{ is odd for } 1\leq i\leq d^{n-1}.\\
              E_i^{\prime}=K_{n-1}(\sqrt{\mathrm{Berl}(F(t,\phi)-\beta_i)}) \text{ if } q\text{ is even for } 1\leq i\leq d^{n-1}.
           \end{split}
           \end{equation}
 $E_i^{\prime}$ is
the fixed field of $A_d$ in $M_i^{\prime}$, where, $A_d$ is the alternating group on $d$ letters, which is a normal subgroup of $S_d$. Clearly, 
$E_{1}^{\prime}$,$\dots$,$ E_{d^{n-1}}^{\prime}$
are linearly disjoint over $K_{n-1}$. This establishes (\ref{sgn}). 
\end{remark}
\begin{proposition}\label{rel-max}
The relative Galois extension $K_{n}/K_{n-1}$ is maximal, that is $\mathrm{Gal}(K_{n}/K_{n-1})\cong 
 S_{d}^{d^{n-1}}$ for all $n\geq 1.$ 
 \end{proposition}
 \begin{proof}
 The proof is immediate from the Proposition \ref{se_iso}. We use induction on $n$. The  result holds true for $n=1$, i.e.,
 \begin{displaymath}
 \mathrm{Gal}(F(t,\phi)/ \overline{\F}_q (\mathbf a))=\mathrm{Gal}(K_1/\overline{\F}_q(\mathbf a))\cong S_d.
\end{displaymath}
Assume the result holds true for $n-1$, i.e.,
\begin{displaymath}
\mathrm{Gal}(K_{n-1}/K_{n-2})\cong S_d^{d^{n-2}}.
 \end{displaymath}
By Proposition \ref{se_iso},
 \begin{equation}\label{bvis}
 K_{n}/K_{n-1}=M_1^{\prime}/K_{n-1}\cdots M_{d^{n-1}}^{\prime}/K_{n-1}. 
 \end{equation}
 and
\begin{displaymath}                                                                 \mathrm{Gal}(M_i^{\prime}/K_{n-1})\cong S_d, \text{ for } 
1\leq i\leq d^{n-1}.
\end{displaymath}
By the result of Lang [\cite{lan}, VI, \S 1.14, \S 1.15],
\begin{displaymath}
 \mathrm{Gal}(K_{n}/K_{n-1})\cong \mathrm{Gal}(M_1^{\prime}/K_{n-1})\times\dots\times(M_{d^{n-1}}^{\prime}/K_{n-1}).
\end{displaymath}
\begin{displaymath}
 \sigma\mapsto (\sigma|_{M_1^{\prime}},\dots,\sigma|_{M_{d^{n-1}}^{\prime}}) \text{ is an isomorphism.}
\end{displaymath}
This gives,
\begin{equation}\label{subprf}
  \mathrm{Gal}(K_{n}/K_{n-1})\cong S_{d}^{d^{n-1}} \text{ for } 1\leq i\leq d^{n-1}.
 \end{equation}
  Thus,  $ \mathrm{Gal}(K_{n}/K_{n-1})$  is maximal for all $n\geq 1$.
 \end{proof}
 We need the following Proposition for completeness. 
\begin{proposition}\label{arb}
Suppose $n$ is any positive integer. Then, any prime $P$ of 
$\overline{\F}_q(\mathbf a)$ ramifying in $K_{n}$ does not ramify in $K_1,\dots,K_{n-1}$. 
 \end{proposition}
 \begin{proof}
  Let us begin by recalling that
  for a fixed $n\in \N$,
  \begin{equation}
   F^{\circ n}(t,\phi)= F(F^{\circ n-1}(t,\phi))\text{ is the }n\text{-th}\text{ iterate  of }F(t,\phi)
  \end{equation}
  and $K_n/\overline{\F}_q(\mathbf a)$ is the splitting field of $F^{\circ n}(t,\phi)$.
  We know  that for any fixed $n\in \N,$ $K_n$ is obtained from $K_{n-1}$ by adjoining the roots of $F(t,\phi)-\beta_i$, for $1\leq i\leq d^{n-1}$, where, $\beta_1$,$\dots$,$\beta_{d^{n-1}}$ are the $d^{n-1}$ distinct roots  of $F^{\circ n-1}(t,\phi)$. We prove the claim by induction.
  For $n=1$, let $\alpha_1,\dots,\alpha_d$ be the $d$ distinct roots of $F(t,\phi)$ in $K_1$. By definition, 
\begin{equation}\label{dpoly}
 F^{\circ 2}(t,\phi)=F(F(t,\phi))=\prod_{i=1}^{d}F(t,\phi)-\alpha_i
\end{equation}
The polynomial $F(t,\phi)$ is irreducible separable over $\overline{\F}_q(\mathbf a)$ which is monic in $t$ of degree $d$ and splits completely in $K_1[t]$.
Since the $d$ polynomials $F(t,\phi)-\alpha_i$ on the right of (\ref{dpoly}) are  linear in $\alpha_i$, they are irreducible separable and pairwise coprime in $K_1[t]$, which splits completely in $K_2[t]$. Therefore for $1\leq i\leq d$, the polynomials  $$F(t,\phi)-\alpha_i \text{ and }F(t,\phi) \text{ do not have any root in common,}  $$ so that, when
 the $\mathrm{char}(\F_q)$ is odd, 
the polynomials
\begin{displaymath}
 \mathrm{Disc}(F(t,\phi)-\alpha_i) \text{ and }\mathrm{Disc}(F(t,\phi)) \text{  do not have common roots, } 
\end{displaymath}
Thus,
\begin{displaymath}
 \mathrm{Disc}(F^{\circ 2}(t,\phi)) \text{ and }
  \mathrm{Disc}(F(t,\phi)) \text{ do not have common roots.}
\end{displaymath}
Similarly when the  $\mathrm{char}(\F_q)$ is even, 
the polynomials 
\begin{displaymath}
 \mathrm{Berl}(F(t,\phi)-\alpha_i) \text{ and } \mathrm{Berl}(F(t,\phi)) \text{ have no common roots,  }
\end{displaymath}
so that,
\begin{displaymath}
 \mathrm{Berl}(F^{\circ 2}(t,\phi)) \text{ and } \mathrm{Berl}(F(t,\phi)) \text{ have no common roots.}
\end{displaymath}
Therefore any prime $P$ of $\F_q(\mathbf a)$  ramifying in $K_2$ does not ramify in 
$K_1$.
Assume the result holds for $n-1$.
As before, let $\beta_1$, $\beta_2$, $\dots$,  $\beta_{d^{n-1}}$ be the $d^{n-1}$ distinct roots of $F^{\circ n-1}(t,\phi)$ in $K_{n-1}.$ Then
\begin{equation}\label{rt}
 F^{\circ n}(t,\phi)=F(F^{\circ n-1}(t,\phi))=\prod_{i=1}^{d^{n-1}}F(t,\phi)-\beta_i
\end{equation}
The polynomial $F^{\circ n-1}(t,\phi)$ splits completely in $K_{n-1}[t]$.
The $d^{n-1}$ polynomials  $F(t,\phi)-\beta_i$ on the right of (\ref{rt})  are monic irreducible separable  and relatively prime in $K_{n-1}[t]$, which split completely in $K_{n}[t]$. Therefore, for $1\leq i\leq d^{n-1},$ the polynomials 
$$F(t,\phi)-\beta_i \text{ and } F^{\circ n-1}(t,\phi) \text{ do not have any root in common},$$
so that, when the $\mathrm{char}(\F_q)$ is odd, the polynomials
\begin{displaymath}
\mathrm{Disc}(F^{\circ n}(t,\phi)) \text{ and } \mathrm{Disc}(F^{\circ n-1}(t,\phi)) \text{ do not have common roots.}
\end{displaymath}
Similarly when the $\mathrm{char}(\F_q)$ is even,
the polynomials
\begin{displaymath}
 \mathrm{Berl}(F^{\circ n-1}(t,\phi))\text{ and } \mathrm{Berl}(F^{\circ n}(t,\phi))\text{ do not have common roots.}
\end{displaymath}
Thus, any prime $P$ of $\F_q(\mathbf a)$
ramifying in $K_n$ does not ramify in $K_{n-1}$.
Equivalently, the splitting fields $K_1$,$K_2$,$\dots$,$K_n$ are linearly disjoint over $\F_q(\mathbf a)$. In short, if the $\mathrm{char}(\F_q)\neq2,$ the polynomials
$\mathrm{Disc}(F(t,\phi))$, $\mathrm{Disc}(F^{\circ 2}(t,\phi))$, $\dots$, $\mathrm{Disc}(F^{\circ n}(t,\phi))$ are relatively prime in $\overline{\F}_q(\mathbf a)$. Therefore,
\begin{equation}
\prod_{i=1}^{n}\mathrm{Disc}(F^{\circ i}(t,\phi)) \text{ is not a square.}  
\end{equation}   
Similarly,   if the $\mathrm{char}(\F_q)=2$, the polynomials
$\mathrm{Berl}(F(t,\phi))$,$\dots$,$\mathrm{Berl}(F^{\circ n}(t,\phi))$ are relatively prime in  $\overline{\F}_q(\mathbf a)$. Therefore,
\begin{equation}\prod_{i=1}^{n}\mathrm{Berl}(F^{\circ i}(t,f)) \text{ is not a square. }\end{equation}  
\end{proof}
Now we are set to prove parts (2) and (3) of Theorem
 \ref{main_th1}.
 We prove the claim by induction on $n$. The result is  true for $n=1$. Assume that, the result holds true for $n-1$, i.e.,
 \begin{displaymath}
 \mathrm{Gal}(F^{\circ n-1}(t,\phi)/\overline{\F}_q(\mathbf a))\cong [S_d]^{n-1}.
 \end{displaymath}
 Accordingly,
 \begin{displaymath}
[K_{n-1}:\overline{\F}_q(\mathbf a)]=|[S_d]^{n-1}|=|\underbrace{S_d\wr\dots\wr S_d}_\text{$(n-1)$ times}|=|(\underbrace{S_d\wr\dots\wr S_d)}_\text{$(n-2)$ times}|^{d}\cdot d!
\end{displaymath}
Therefore,
\begin{displaymath}
[K_{n-1}:\overline{\F}_q(\mathbf a)]=|[S_d]^{n-1}| =(d!)^{d^{n-2}}\cdot(d!)^{d^{n-3}}\cdots (d!).
\end{displaymath}
 From (\ref{subprf}), we know that $K_n$ has degree $(d!)^{d^{n-1}}$ over $K_{n-1}$,
 and
\begin{displaymath}
[K_{n}:\overline{\F}_q( \mathbf a)]= [K_{n}:K_{n-1}]\cdot[K_{n-1}:K_{n-2}]\cdots[K_1:\overline{\F}_q(\mathbf a)].
\end{displaymath}
\begin{displaymath}
= [K_{n}:K_{n-1}]\cdot[K_{n-1}:\overline{\F}_q(\mathbf a)]
\end{displaymath}
Therefore,
 \begin{equation}
 [K_n:\overline{\F}_q(\mathbf a)]= (d!)^{d^{n-1}}\cdot|[S_d]^{n-1}| =|[S_d]^{n}|.
\end{equation}
  Thus
 \begin{equation}\label{linear-gal}
 \mathrm{Gal}(K_{n}/\overline{\F}_q(\mathbf a))\cong [S_d]^{n} \text{ for all } n\geq1.
\end{equation} 
Thus,  $\mathrm{Gal}(F^{\circ n}(t,f)/\F_q(\mathbf a))$ is maximal for $n\geq 1$. This implies
the arboreal representation associated to $F(t,\phi)$ is surjective.  
This  concludes the conditions  (2) and (3) of  Theorem \ref{main_th1}.
\section{Applications}
\label{appl}
In this section we outline the proof of  Corollary \ref{stability}.
The objective is to
count the number of polynomials $f\in \F_q[t]$ of degree $r$ for which $F(t,f)$, $F^{\circ 2}(t,f)$,$\dots$,$F^{\circ l}(t,f)$ are simultaneously square-free in  $\F_q[t]$ for $l>1$.
\subsection{Independence of cycle structure}\label{indcy}
Let $ \mathcal{M}_n\subset \F_q[t]$ be the collection of all monic polynomials of degree $n$ over $\F_q$ and 
 $f\in \mathcal{M}_n$ be a separable polynomial of degree $n>0$.
 We say that its cycle structure is $\lambda(f)=(\lambda_1,\dots,\lambda_n)$, if in the decomposition $f=\prod_{i}P_i$ into prime factorization, we have $\#\{i:\mathrm{deg}(P_i)=j\}=\lambda_j$. Thus we get a partition of $n$. 
 The cycle structure of a permutation 
 $\sigma$ of $n$ letters is the partition
 $\lambda(\sigma)= (\lambda_1,\dots,\lambda_n)$ of $n$, if in the decomposition of $\sigma$ as a product of disjoint cycles, there are $\lambda_j$ cycles of length $j$ and $\sum{j}\lambda_j=n$.
 It is well known that the distribution over $\mathcal{M}_n$ of factorization types tends to the distribution of cycle types in $S_n$.
By
the cycle type of  $f(t)\in\mathcal{M}_n $, we mean the partition of $n$ obtained from the 
degrees of the irreducible factors of $f$, which we identify with the associated conjugacy class of
the symmetric group $S_n$ on $n$ letters.
   For each partition $\lambda\vdash n$, we denote by $p(\lambda)$, the probability that a random permutation on $n$ letters has cycle structure $\lambda$:
  \begin{equation}\label{cauchy}
  p(\lambda)= \frac{\#\{\sigma\in S_n: \lambda({\sigma})=\lambda\}}{\# S_n}                                                                                                                                        
  \end{equation}
 
 When $q$ is large $p(\lambda)$ is  asymptotic to the  probability that
a random polynomial $f\in \mathcal{M}_n$ has cycle structure $\lambda$ (see Cohen \cite{sdc}). 
Additionally Cohen [\cite{sdc3}, Theorem 3] observed, cycle structures (factorization types) of distinct separable  polynomials $f_1,\dots,f_l\in \F_q[t]$  become linearly independent as $q\rightarrow \infty.$ He   
  also proved that
the  Chebotarev density
theorem and the Weil bound can be coupled to count  not only the occurrence of irreducibles but also the occurrence of polynomials in 
appropriate sequences with any prescribed cycle type. This is generalized in  [\cite{pol1}, Theorem 2]  and [\cite{abz}, Theorem 3.1].
 For a partition $\lambda\vdash n$, let $\tau(\lambda)$ be the  characteristic function of $f\in \mathcal{M}_n$ of cycle structure $\lambda:$
\begin{displaymath}\tau_\lambda(f)=
\begin{cases}
1, \text{ if } \lambda({f})=\lambda\\
0, \text{ otherwise}
\end{cases}\end{displaymath}
 We state the following result on the   independence of cycle structure of non-associate, separable polynomials in $\mathcal{M}_n$.
\begin{theorem} [Theorem 1.4, \cite{abz}]\label{ab-z}
 For fixed positive integers $n$ and $k$\begin{displaymath}
 \frac{1}{q^{n}}\sum_{f_i\in \mathcal{M}_n}\tau_{\lambda}(f_1)\cdots\tau_{\lambda}(f_k)=\underbrace{p(\lambda)\cdots p(\lambda)}_\text{$k$ times}+O(q^{-1/2})
\end{displaymath}
uniformly for all distinct, monic separable polynomials $f_1,\dots,f_k\in \mathcal{M}_n$ and on all partitions $\lambda\vdash n$ as $q\rightarrow\infty$.
\end{theorem}
 The 
 statistics of Theorem \ref{ab-z} is induced from the statistics of
cycles structure of tuples of elements in the direct product $(S_n)^{k}$
of $k$ copies of
the symmetric group $S_n$ on $n$ letters,
which is due to a more general explicit Chebotarev
theorem for function fields, [\cite{abz}, Theorem 3.1].
 
Corollary \ref{stability}  reduces to
count the 
number of monic polynomials $f$ in $\F_q[t]$ of degree $r$ for which $\mu(F^{\circ i}(t,f))=\pm1$ for $1\leq i\leq l$, where $\mu$ is the Mobius function for $\F_q[t]$.
For $F\in \F_q[t]$ the M\"{o}bius function $\mu(F)$ can be computed in terms of discriminant of $F$.
Therefore, in the odd characteristic case, by the work of  Conrad [\cite{ccg},  Lemma  4.1],
\begin{equation}\label{disc1}
 \mu(F)=(-1)^{\mathrm{deg}F}\chi_2(\mathrm{Disc}(F)).
\end{equation}
Similarly, in the even characteristic case,
by the work of  Carmon [\cite{car}, \S 2.3],
\begin{equation}\label{disc2}
 \mu(F)=(-1)^{\mathrm{deg}F}\chi_2(\mathrm{Berl}(F)).
\end{equation}
where $\chi_2$ is the quadratic character of $\F_q$.
By Proposition \ref{arb},
the discriminant of the polynomials $F^{\circ 1}(t,\phi)$,$\dots$, $F^{\circ l}(t,\phi)$ are linearly independent in the $\F_2$ vector space $\F_q(\mathbf a)^{\times}/(\F_q(\mathbf a)^{\times})^{2}$.
 Therefore,
for suitable, irreducible specializations $f\in \F_q[t]$, the polynomials  $F^{\circ 1}(t,f)$, $\dots$, $F^{\circ l}(t,f)$ are separable and distinct, hence their cycle structure become independent as $q\rightarrow \infty$. 
Now applying Theorem \ref{ab-z}, when 
$\tau_{\lambda}=\mu$,
 the number of $f\in \F_q[t]$ of degree $r$ for which $F(t,f)$ is stable over $\F_q$, such that, $F(t,f)$  (which has the adjusted critical orbit of  length $l>1$),  is 
 $$ \frac{q^{r}}{2^{l}}+O_{l,\mathrm{deg}F^{\circ i}}(q^{r{-1}/{2}}),\quad q\rightarrow \infty.
 $$
the implicit constant in the $O$-notation depends only on $l$ and the degrees $\mathrm{deg}_tF^{\circ i}$ for $1\leq i\leq l.$
\section{Acknowledgment}
 I thank Prof. Soumya Das, Department of Mathematics, IISc,  for the useful discussions  during the early stage of this work.
 I am grateful to Prof. Ila Varma, Department of Mathematics,  University of Toronto and Prof. Aditya Karnataki,  Chennai Mathematical Institute for reading carefully through the earlier drafts of this manuscript and for their valuable remarks which improved this  article to a great extent.
 I thank the Department of Science and Technology, Government of India, for the financial help under the Women Scientist Scheme Project.
 
 %------------------------------------------
 \bibliographystyle{plain}

\end{document}